\newtheorem{theorem}{Theorem}
\newtheorem{lemma}[theorem]{Lemma}
\newtheorem{definition}[theorem]{Definition}
\date{\today}
\begin{document}

\title[On the Dimension of Matrix Representations of Nilpotent Groups]{On the  Dimension of Matrix Representations of Finitely Generated Torsion Free Nilpotent Groups}

\author[M. Habeeb]{Maggie Habeeb}
\address{California University of Pennsylvania}%
\email{Habeeb@CalU.edu}
\thanks{Research of M.H. was supported partially by the NSF-LSAMP fellowship.}
\author[D. Kahrobaei]{Delaram Kahrobaei}
\address{CUNY Graduate Center and City Tech, City University of New York}%
\email{DKahrobaei@GC.Cuny.edu}
\thanks{Research D.K. was partially supported by the Office of Naval Research grant N000141210758, PSC-CUNY grant from the CUNY research foundation, as well as the City Tech foundation.}

\maketitle
\begin{abstract}It is well known that any polycyclic group, and hence any finitely generated nilpotent group, can be embedded into $GL_{n}(\mathbb{Z})$ for an appropriate $n\in \mathbb{N}$; that is, each element in the group has a unique matrix representation.  An algorithm to determine this embedding was presented in \cite{nickel}. In this paper, we determine the complexity of the crux of the algorithm and the dimension of the matrices produced as well as provide a modification of the algorithm presented in \cite{nickel}. \end{abstract}

\section{Background Information} In this section we will review basic facts about polycyclic and nilpotent groups.  We refer the reader to \cite{holt} and \cite{PH57} for more information on these groups.
\subsection{Polycyclic Groups}
\begin{definition}\cite{holt}
A group is called polycyclic if it admits a finite subnormal series
\begin{eqnarray*}
G=G_{1} \triangleright G_{2} \triangleright G_{3} \triangleright \cdots \triangleright G_{n+1}=1
\end{eqnarray*}
where each $G_{i}/G_{i+1}$ is cyclic.\end{definition}
  The number of infinite factors in the polycyclic series is called the \textit{Hirsch length}, and is independent of the polycyclic series chosen. Since each factor is cyclic there exists an $x_{i}\in G$ such that $\left\langle x_{i}G_{i+1}\right\rangle=G_{i}/G_{i+1}$.  We call the sequence $X=[x_{1}, x_{2}, \cdots, x_{n}]$ a polycyclic sequence for $G$.  The sequence of relative orders of $X$ is the sequence $R(X)=(r_{1}, \cdots, r_{n})$ where $r_{i}=\left[G_{i}:G_{i+1}\right]$.  We denote the set of indices in which $r_{i}$ is finite by $I(X)$. \\
\indent Polycyclic groups have finite presentation
\begin{eqnarray*} \left\langle a_{1}, \cdots, a_{n}; a_{i}^{a_{j}}=w_{ij},  a_{i}^{a_{j}^{-1}}=v_{ij},  a_{k}^{r_{k}}=u_{kk} \text{ for } k\in I, 1\leq j< i \leq n \right\rangle\end{eqnarray*} where $r_{i}\in \mathbb{N} \cup \infty$, $r_{i}< \infty$ if $i\in I\subseteq\left\{1, 2,\cdots, n\right\}$ and $w_{ij}, v_{ij}, u_{jj}$ are words in the generators $a_{j+1}, \cdots, a_{n}$. The relations $a_{i}^{a_{j}}=w_{ij},  a_{i}^{a_{j}^{-1}}=v_{ij}$ are called conjugacy relations, while the relations $a_{k}^{r_{k}}=u_{kk}$ are called power relations. If $r_{i}=[G_{i}:G_{i+1}]$ for each $i\in \left\{1, 2,\cdots, n\right\}$ then this presentation is called a consistent polycyclic presentation.  Every polycyclic group admits a consistent polycyclic presentation, which results in the following normal form.
\begin{definition}\cite{holt} Let $X=\left[x_{1}, \cdots, x_{n}\right]$ be a polycyclic sequence for $G$ and $R(X)=(r_{1}, \cdots, r_{n})$ be its sequence of relative orders.  Then every $g\in G$ can be written uniquely in the form $g=x_{1}^{e_{1}}\cdots x_{n}^{e_{n}}$ with  $e_{i}\in \mathbb{Z}$ and $0 \leq e_{i} < r_{i}$.  This expression is called the normal form of $G$ with respect to $X$.\end{definition}
\indent The normal form for an element in a group given by a consistent polycyclic presentation can be determined via the \textit{collection algorithm}. Thus, the collection algorithm gives a solution to the word problem for a group $G$ given by a consistent polycyclic presentation.  The collection algorithm works by iteratively applying the power and conjugacy relations of the presentation until the normal form for a word is obtained.  The nature of the power and conjugacy relations ensure that the collection algorithm terminates (see \cite{holt}). The collection algorithm is currently the best known way to obtain the normal form of an element in a polycyclic group, although the worst case time complexity is unknown. \\
\indent By using different representations of elements in a polycyclic group one can solve some group theoretic problems such as the  conjugacy search problem. This can be done by using a different representation of group elements. For example to find a different representation of group elements one can use the well known fact, due to L. Auslander (see \cite{kos}), that every polycylic group is linear; that is, every polycyclic group $G$ can be embedded into $GL_{n}(\mathbb{Z})$ for an appropriate $n$.  \\
\subsection{Nilpotent Groups}
\begin{definition}\cite{PH57} A group, $G$, is called nilpotent of class $c\geq 1$ if  \begin{eqnarray*}[y_{1}, y_{2}, \cdots, y_{c+1}]=1 \text{ for any } y_{1},y_{2}, \cdots, y_{c+1} \in G,\end{eqnarray*} where $[y_{1},y_{2},y_{3}]=[[y_{1},y_{2}],y_{3}]$.  Equivalently we may define a nilpotent group as follows. We define a lower central series of $G$ inductively: let $\gamma_{1}(G)=G$, $\gamma_{2}(G)=[G,G]$, and $\gamma_{k}(G)=[\gamma_{k-1}(G),G]$. If $\gamma_{k}(G)=\left\{e\right\}$ for some $k$, then $G$ is nilpotent. \end{definition}

Finitely generated nilpotent groups are in fact polycyclic.  Let $G$ be a finitely generated torsion-free nilpotent group. Since $G$ is torsion-free and finitely generated nilpotent, there exists a central series
 \begin{eqnarray*}
 G=G_{1}\triangleright G_{2} \triangleright G_{3} \cdots \triangleright G_{n+1}=\left\{1\right\}
 \end{eqnarray*}
 such that for each $1\leq r \leq n$ the factor $G_{r}/G_{r+1}$ is infinite cyclic generated by $x_{r}G_{r+1}$ for some $x_{r}\in G_{r}$.  Given such $x_{1}, \cdots, x_{n}$ each element $x\in G$ has unique normal form
\begin{eqnarray*}
x=x_{1}^{e_{1}}\cdots x_{n}^{e_{n}},
\end{eqnarray*}
where $e_{i}\in \mathbb{Z}$.  The $n$-tuple $(e_{1}, \cdots, e_{n})$ is called the vector of exponents of $x$.\\
\indent As mentioned above all finitely generated nilpotent groups are polycyclic, and hence admit a polycyclic presentation. In particular every finitely generated nilpotent group admits a presentation with conjugacy relations of the form:
\begin{eqnarray*}
x_{i}^{x_{j}}=x_{i}x_{i+1}^{b_{i,j,i+1}}\cdots x_{n}^{b_{i,j,n}} \text{ for } 1\leq j<i\leq n,
\end{eqnarray*}
\begin{eqnarray*}
x_{i}^{x_{j}^{-1}}=x_{i}x_{i+1}^{c_{i,j,i+1}}\cdots x_{n}^{c_{i,j,n}} \text{ for } 1\leq j<i\leq n,
\end{eqnarray*}
where the $c_{i,j,k}, b_{i,j,k} \in \mathbb{Z}$.
A polycyclic presentation with these conjugacy relations is called a \textit{nilpotent presentation}.  Since $G$ is torsion free, there exists a consistent nilpotent presentation with each $r_{i}=\infty$. Hence, for any finitely generated torsion-free nilpotent group we may find a consistent nilpotent presentation of the form:\\
\resizebox{\linewidth}{!}{$\left\langle x_{1}, \cdots, x_{n}; x_{i}^{x_{j}}=x_{i}x_{i+1}^{b_{i,j,i+1}}\cdots x_{n}^{b_{i,j,n}}, x_{i}^{x_{j}^{-1}}=x_{i}x_{i+1}^{c_{i,j,i+1}}\cdots x_{n}^{c_{i,j,n}} \text{ for } 1\leq j<i\leq n\right\rangle.$}\\
 Let $G$ be a finitely generated torsion free nilpotent group with a presentation as above. Then each $x\in G$ may be written uniquely in the form $x=x_{1}^{e_{1}}\cdots x_{n}^{e_{n}}$. Given two elements $x=x_{1}^{e_{1}}\cdots x_{n}^{e_{n}}, y=x_{1}^{y_{1}}\cdots x_{n}^{y_{n}}$, their product can be written uniquely as:
\begin{eqnarray*}
xy=x_{1}^{e_{1}}\cdots x_{n}^{e_{n}}x_{1}^{y_{1}}\cdots x_{n}^{y_{n}}=x_{1}^{z_{1}}\cdots x_{n}^{z_{n}}.
\end{eqnarray*}
Philip Hall \cite{PH57} showed that there are rational polynomials $f_{1}, \cdots, f_{n}$ that describe the multiplication of elements in $G$, a finitely generated torsion free nilpotent group. In particular, if $(e_{1}, \cdots, e_{n})$, $(y_{1}, \cdots, y_{n})$, and $(z_{1}, \cdots, z_{n})$ are the vector of exponents for $x, y, \text{ and } xy$, respectively, then for $1 \leq r \leq n$ we have
\begin{eqnarray*}
z_{r}=f_{r}(e_{1}, \cdots, e_{n},y_{1}, \cdots, y_{n}).
\end{eqnarray*}  In \cite{LGS} Leedham-Green and Soicher show how to compute polynomials in variables corresponding to the $e_{i}, y_{i}, c_{i,j,k}$ such that they describe the multiplication of elements in the group $G$ by an algorithm referred to as ``Deep Thought".  In \cite{nickel} W. Nickel  presents an algorithm for computing a presentation of a finitely generated torsion free nilpotent group given by a polycyclic presentation by unitriangular matrices over $\mathbb{Z}$.  The algorithm by Nickel uses the polynomials computed by ``Deep Thought."  There is another algorithm due to DeGraaf and Nickel that computes a faithful unitriangular representation of finitely generated torsion-free nilpotent groups in \cite{degraaf}, but the algorithm presented by Nickel in \cite{nickel} is more efficient.
\section{Matrix Representations of Finitely Generated Torsion-free Nilpotent Groups}
W. Nickel introduced an algorithm to compute a matrix representation for a finitely generated torsion-free nilpotent group given by a nilpotent presentation in \cite{nickel}.  The crux of the algorithm computes a $\mathbb{Q}$-basis for a finite dimensional faithful $G$-module, where $G$ is a finitely generated torsion-free nilpotent group given by a nilpotent presentation.\\
\indent In \cite{nickel}, a faithful finite dimensional $G$-module is constructed as follows.  Let $G$ be a finitely generated torsion-free nilpotent group with nilpotent generating sequence $a_{1}, \cdots, a_{n}$ and multiplication polynomials $q_{1}, \cdots,q_{n}$. Let $G$ act on the dual of the group ring $(\mathbb{Q}G)^{*}$ by defining $f^{g}(h):=f(hg^{-1})$. By identifying $a_{1}^{x_{1}}\cdots a_{n}^{x_{n}}$ with $x_{1}, \cdots,x_{n}$ the image of $f\in (\mathbb{Q}G)^{*}$ under the action of $G$ can be described using the multiplication polynomials $q_{1}, \cdots, q_{n}$; that is, $f^{g}=f(q_{1}, \cdots,q_{n})$. With this action one can construct a finite dimensional faithful $G$-submodule of $(\mathbb{Q}G)^{*}$ by using the following lemma.
\begin{lemma} \cite{nickel} The submodule $M$ of $(\mathbb{Q}G)^{*}$ generated by $t_{i}:G \rightarrow \mathbb{Z}$ with $t_{i}(a_{1}^{x_{1}}\cdots a_{n}^{x_{n}})=x_{i}$ is a finite dimensional faithful $G$-module.
\end{lemma}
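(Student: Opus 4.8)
The plan is to separate the two assertions --- finite dimensionality and faithfulness --- and to dispose of faithfulness first, since it follows almost immediately from the definition of the action. Recall that $M$, being the submodule generated by $t_1, \dots, t_n$, is spanned as a $\mathbb{Q}$-vector space by the translates $\{t_i^{\,g} : 1 \le i \le n,\ g \in G\}$. For faithfulness, suppose $g \in G$ acts trivially on $M$; then in particular $t_i^{\,g} = t_i$ for every $i$. Evaluating at the identity gives $t_i(g^{-1}) = t_i^{\,g}(1) = t_i(1) = 0$ for all $i$, so every normal-form exponent of $g^{-1}$ vanishes, whence $g^{-1} = 1$ and $g = 1$. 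Thus the action has trivial kernel and $M$ is faithful.

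For finite dimensionality, the idea is to trap all the translates $t_i^{\,g}$ inside a single finite-dimensional space of functions that does not depend on $g$. Writing $h = a_1^{x_1}\cdots a_n^{x_n}$ and $g^{-1} = a_1^{y_1}\cdots a_n^{y_n}$, the definition of the action together with the description $f^g = f(q_1, \dots, q_n)$ gives $t_i^{\,g}(h) = t_i(hg^{-1}) = q_i(x_1,\dots,x_n,y_1,\dots,y_n)$. Fixing $g$ (equivalently, fixing the tuple $y_1, \dots, y_n$) turns $t_i^{\,g}$ into a polynomial function of the coordinates $x_1, \dots, x_n$ of $h$, with coefficients depending on the $y_j$. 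The key structural point is that each $q_i$ is a genuine polynomial with only finitely many terms, so the set $\mathcal{B}$ of monomials in $x_1, \dots, x_n$ occurring in any of $q_1, \dots, q_n$ is finite and independent of $g$.

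Granting this, every translate $t_i^{\,g}$ lies in the $\mathbb{Q}$-span $V$ of the finitely many monomial functions $h \mapsto x_1^{\alpha_1}\cdots x_n^{\alpha_n}$ with $(\alpha_1, \dots, \alpha_n) \in \mathcal{B}$. Since $M$ is spanned by these translates, $M \subseteq V$ and hence $\dim_{\mathbb{Q}} M \le |\mathcal{B}| < \infty$. Together with the faithfulness argument this establishes that $M$ is a finite-dimensional faithful $G$-module.

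I expect the main obstacle to be the bookkeeping behind this ``key structural point'': one must verify that absorbing all the $y_j$-dependence into the coefficients really does leave only a fixed finite list of $x$-monomials, uniformly in $g$. This is precisely where the polynomiality of multiplication (Hall's theorem, with the nilpotency class bounding the degrees of the $q_i$) does the work; without a uniform degree bound the $G$-orbit of the $t_i$ could conceivably span an infinite-dimensional space. By contrast, the faithfulness half uses nothing more than the defining formula for the action and the uniqueness of the normal form.
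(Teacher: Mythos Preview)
Your argument is correct. The paper itself does not prove this lemma at all --- it merely quotes the result from \cite{nickel} and moves on to describe how the basis is built --- so there is no ``paper's proof'' to compare against. Your approach is the natural one: faithfulness drops out of evaluating $t_i^{\,g}$ at the identity and invoking uniqueness of the normal form, and finite dimensionality follows by observing that, since each $q_i \in \mathbb{Q}[x_1,\dots,x_n,y_1,\dots,y_n]$ has only finitely many terms, specialising the $y$-variables leaves every translate $t_i^{\,g}$ in the $\mathbb{Q}$-span of a fixed finite set of $x$-monomial functions. Both halves are sound and no step is missing; the ``bookkeeping'' you flag as a potential obstacle is in fact immediate once one notes that $\mathcal{B}$ is extracted from the polynomials $q_1,\dots,q_n$ themselves rather than from any particular $g$.
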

\indent To construct a basis a method called \textit{Insert} is used.  \textit{Insert} adds a polynomial to a given basis of polynomials if that polynomial is not in the span of the basis; that is, if the polynomial cannot be written as a linear combination of the basis elements.  This is done by first placing an ordering on the monomials.  The leading monomial of a polynomial is then the monomial that is largest with respect to the ordering.  \textit{Insert} takes a basis of polynomials in ascending order and a polynomial $f$, and determines if $f$ is in the span of the basis. If $f$ is not in the span of the basis, then it will be added to the basis. \textit{Insert} determines if $f$ is in the span of the basis by subtracting the appropriate multiple of each polynomial in the basis, starting with the polynomial that is largest with respect to the ordering and continuing in descending order. If $f\neq 0$ after this process is completed, then $f$ is not in the span of the basis and is added to the original basis.\\
 \indent In order to determine a $\mathbb{Q}$-basis for a finite dimensional $G$-module generated by $f_{1}, \cdots, f_{k}$, the algorithm uses \textit{Insert} to build up a basis from $f_{1}, \cdots, f_{k}$.  Then the algorithm works up the central series of $G$ beginning with $G_{n}$.  For each $j$ the algorithm will compute a basis for the $G_{j}$-module $M_{j}$ from a basis of the $G_{j+1}$-module $M_{j+1}$ generated by $f_{1}, \cdots, f_{k}$. In order to obtain a generating set for $M_{j}$ it is enough to close the module $M_{j+1}$ under the action of powers of $a_{j}$ (since $G_{j}/G_{j+1}$ is cyclic) and to apply powers of $a_{j}$ to each basis element of $M_{j+1}$. The algorithm for computing a $\mathbb{Q}$-basis is presented in Figure 1.\\
  \indent  The matrix representation for each generator $a_{k}$ of $G$ can be calculated by decomposing the image of each basis element under $a_{k}$ in terms of the basis.  The ordering utilized in \textit{Insert} is the reverse lexicographic ordering; that is, $x_{1}^{k_{1}}\cdots x_{n}^{k_{n}}<x_{1}^{l_{1}}\cdots x_{n}^{l_{n}}$ if there is an $i$ ($1\leq i \leq n$) such that $k_{j}=l_{j}$ for $i<j\leq n$ and $k_{i}<l_{i}$. Suppose that $b_{1}, \cdots, b_{m}$ is the basis produced using this ordering. Let $q_{1}^{(j)}, \cdots q_{n}^{(j)}$ be the polynomials that describe the multiplication of an arbitrary element of the group with $a_{j}^{-1}$:
      \begin{eqnarray*}
      a_{1}^{x_{1}}\cdots a_{n}^{x_{n}}\cdot a_{j}^{-1}=a_{1}^{q_{1}^{(j)}} \cdots a_{n}^{q_{n}^{(j)}}
      \end{eqnarray*}
 The polynomials $q_{i}^{(j)}$ are obtained from the multiplication polynomials $q_{i}$ by setting $y_{i}=0$ if $i \neq j$ and $y_{i}=-1$ if $i=j$.     Then for each basis element $b_{k}$, one can compute $b_{k}(q_{1}^{(j)}, \cdots, q_{n}^{(j)})$ and decompose this element into a linear combination $c_{k_{1}}b_{1}+\cdots + c_{k_{n}}b_{m}$ of basis vectors.  The coefficients $c_{k_{1}}, \cdots, c_{k_{n}}$ are the $k^{th}$ row of the matrix representing $a_{j}$.  For more information on this algorithm see \cite{nickel}.\\

\begin{small}
\begin{figure}[t]
\label{Matrix Representation Algorithm (building a basis)}
\caption{Matrix Representation Algorithm (building a basis), \cite{nickel}}
\flushleft \textbf{Input:} A finitely generated torsion- free nilpotent group $G$ with a nilpotent generating sequence $a_{1}, \cdots, a_{n}$ and corresponding multiplication polynomials $q_{1}, \cdots, q_{n}$; a list of polynomials $f_{1}, \cdots, f_{k}$.\\
\flushleft \textbf{Output:} A $\mathbb{Q}$-basis $B$ for the $G$-module generated by $f_{1}, \cdots, f_{k}$.\\
\flushleft \textbf{Algorithm:}\\
$B:=[]$;\\

 \textbf{for} $j$ \textbf{in} $[1 \cdots k]$ \textbf{do} Insert($B, f_{j})$; \textbf{od};\\
 \flushleft \textbf{for} $j$ \textbf{in} $[n, n-1\cdots 1]$ \textbf{do}\\
 \qquad \#B is a basis for $M_{j+1}$\\
 \qquad \# Exponents after multiplication by $a_{i}^{-1}$ from the right:\\
 \qquad $q^{(j)}:=[q_{1}(x_{1}, \cdots, x_{n}, y_{i}=-\delta_{ij}),\cdots, q_{n}(x_{1}, \cdots, x_{n}, y_{i}=-\delta_{ij})]$;\\
 \qquad \textbf{for} $f$ \textbf{in} Copy($B$) \textbf{do}\\
 \quad\qquad \# Add to $B$ images of $f$ under powers of $a_{j}$\\
 \quad \qquad \textbf{repeat} \\
 \quad \qquad \qquad  \# Compute $f^{a_{j}}$\\
 \quad \qquad \qquad $f^{a_{j}}:=f(q_{1}^{(j)}, \cdots, q_{n}^{(j)})$;\\
 \quad \qquad \qquad r:=Insert($B,f^{a_{j}}$);\\
 \quad \qquad \qquad $f:=f^{a_{j}}$;\\
 \quad \qquad \textbf{until} r=0;\\
 \qquad \textbf{od};\\
 \quad \textbf{od};\\
 \flushleft \textbf{return} $B$;

\end{figure}
\end{small}
\section{Complexity Analysis}
The algorithm for finding matrix representations of torsion-free finitely generated nilpotent groups formulated in \cite{nickel} was implemented in the \textbf{GAP} package polycyclic.  The algorithm was implemented using the coordinate functions $t_{i}: G\rightarrow \mathbb{Z}$ given by $a_{1}^{x_{1}}\cdots a_{n}^{x_{n}} \mapsto x_{i}$ for $1 \leq i \leq n$ as the input polynomials $f_{1}, \cdots, f_{k}$. In order to analyze the complexity of the algorithm utilized for building a $\mathbb{Q}$-basis (see Figure 1), we must analyze how the action of $a_{j}\in G$ affects each coordinate function $t_{i}$.  Recall that $t_{i}^{a_{j}}:=t_{i}(q_{1}^{(j)}, \cdots, q_{n}^{(j)})$ where $q_{i}^{(j)}=q_{i}(x_{1}, \cdots, x_{n}, y_{i}=-\delta_{ij})$ and the $q_{i}$ are the multiplication polynomials computed via ``Deep Thought". From the nature of the polynomials $q_{i}$ it follows that $q_{i}^{(j)}=x_{i}$ for $1 \leq i <j$, $q_{j}^{(j)}=x_{j}-1$, and $q_{i}^{(j)}=x_{i}+\overline{q_{i}}^{(j)}(x_{1}, \cdots, x_{i-1})$ for $ j <i \leq n$.  To see this, we will follow the exposition of \cite{nickel}.  We may rewrite the product, $a_{1}^{x_{1}}\cdots a_{n}^{x_{n}}a_{1}^{y_{1}}\cdots a_{n}^{y_{n}}$, of two elements $x=a_{1}^{x_{1}}\cdots a_{n}^{x_{n}}$, $y=a_{1}^{y_{1}}\cdots a_{n}^{y_{n}}\in G$  as \begin{eqnarray*} (a_{1}^{x_{1}}\cdots a_{i}^{x_{i}})(a_{i+1}^{x_{i+1}}\cdots a_{n}^{x_{n}}a_{1}^{y_{1}}\cdots a_{i}^{y_{i}})(a_{i+1}^{y_{i+1}}\cdots a_{n}^{y_{n}}),\end{eqnarray*} which is equal to 
\begin{eqnarray*} (a_{1}^{x_{1}}\cdots a_{i}^{x_{i}}a_{1}^{y_{1}}\cdots a_{i}^{y_{i}})(a_{i+1}^{x^{'}_{i+1}}\cdots a_{n}^{x^{'}_{n}}a_{i+1}^{y_{i+1}}\cdots a_{n}^{y_{n}})
\end{eqnarray*}
since $G_{i+1}$ is normal in $G$. The expression $a_{i+1}^{x^{'}_{i+1}}\cdots a_{n}^{x^{'}_{n}}a_{i+1}^{y_{i+1}}\cdots a_{n}^{y_{n}}$ can be computed in $G_{i+1}$ and does not involve $a_{1}\cdots a_{i}$. 
Hence, $q_{i}$ is determined by $a_{1}^{x_{1}}\cdots a_{i}^{x_{i}}a_{1}^{y_{1}}\cdots a_{i}^{y_{i}}$ and $q_{i}$ only depends on $x_{1}, \cdots, x_{i}, y_{1}\cdots, y_{i}$. Since $a_{i}$ is central in $G/G_{i+1}$ we have that $q_{i}= x_{i}+y_{i}+\overline{q_{i}}$ with $\overline{q_{i}}\in \mathbb{Q}[x_{1}, \cdots, x_{i-1},y_{1}, \cdots, y_{i-1}]$. Since multiplying $a_{1}^{x_{1}}\cdots a_{n}^{x_{n}}$ by $a_{j}^{-1}$ from the right does not affect $a_{1}, \cdots, a_{j-1}$ we have that $q_{i}^{(j)}=x_{i}$ for $1 \leq i <j$. Moreover, we have $q_{j}^{(j)}=x_{j}-1$ and by entering $y_{i}=-\delta_{ij}$ we have $q_{i}^{(j)}=x_{i}+\overline{q_{i}}^{(j)}(x_{1}, \cdots, x_{i-1})$ for $ j <i \leq n$ with $\overline{q_{i}}^{(j)}\in\mathbb{Q}[x_{1}, \cdots, x_{i-1}]$.

Here we would like to introduce some notation.  Since we are required to close the module under powers of each $a_{j}$ for $j=1, \cdots, n,$ we would like to describe the multiplication of an arbitrary group element with $a_{j}^{k}$ for $k\in \mathbb{N}$ :  \begin{eqnarray*}
      a_{1}^{x_{1}}\cdots a_{n}^{x_{n}}\cdot a_{j}^{-k}=a_{1}^{q_{1,k}^{(j)}} \cdots a_{n}^{q_{n,k}^{(j)}}.
      \end{eqnarray*}
 The polynomials $q_{i,k}^{(j)}$ are obtained from the multiplication polynomials $q_{i}$ by setting $y_{i}=0$ if $i \neq j$ and $y_{i}=-k$ if $i=j$, which we denote $-k\delta_{ij}$.   Note that by the same reasoning as above we have $q_{i,k}^{(j)}=x_{i}$ for $1 \leq i <j$, $q_{j,k}^{(j)}=x_{j}-k$, and $q_{i,k}^{(j)}=x_{i}+\overline{q_{i,k}}^{(j)}(x_{1}, \cdots, x_{i-1})$ for $ j <i \leq n$ with $\overline{q_{i,k}}^{(j)}\in\mathbb{Q}[x_{1}, \cdots, x_{i-1}]$.

 In order to understand the action of $a_{j}^{k}$, for $k\in \mathbb{N}$, on each $t_{i}$ we look at three cases: $1\leq i <j$, $i=j$, and $j < i \leq n$. 
\begin{small}
\begin{enumerate}
\item $1 \leq i <j$:
\begin{align*}t_{i}^{a_{j}^{k}}&:=t_{i}(q_{1,k}^{(j)}, \cdots, q_{n,k}^{(j)})\\&=t_{i}(x_{1}, \cdots, x_{j}-k, x_{j+1}+\overline{q_{j+1,k}}^{(j)}(x_{1}, \cdots, x_{j}), \cdots, x_{n}+\overline{q_{n,k}}^{(j)}(x_{1}, \cdots, x_{n-1})) \\&=x_{i}\\&=t_{i}\end{align*}

\item $i=j$:
\begin{align*}t_{j}^{a_{j}^{k}}&:=t_{j}(q_{1,k}^{(j)}, \cdots, q_{n,k}^{(j)})\\&=t_{i}(x_{1}, \cdots, x_{j}-k, x_{j+1}+\overline{q_{j+1,k}}^{(j)}(x_{1}, \cdots, x_{j}), \cdots, x_{n}+\overline{q_{n,k}}^{(j)}(x_{1}, \cdots, x_{n-1})) \\&=x_{j}-k\\&=t_{j}-k\end{align*}

\item $j<i \leq n$:
\begin{align*}t_{i}^{a_{j}^{k}}&:=t_{i}(q_{1,k}^{(j)}, \cdots, q_{n,k}^{(j)})\\&=t_{i}(x_{1}, \cdots, x_{j}-k, x_{j+1}+\overline{q_{j+1,k}}^{(j)}(x_{1}, \cdots, x_{j}), \cdots, x_{n}+\overline{q_{n,k}}^{(j)}(x_{1}, \cdots, x_{n-1})) \\&=x_{i}+\overline{q_{i,k}}^{(j)}(x_{1}, \cdots, x_{i-1}) \\&=t_{i}+\overline{q_{i,k}}^{(j)}(x_{1}, \cdots, x_{i-1}) \end{align*}

\end{enumerate}
\end{small}

By utilizing this information on the action of $a_{j}^{k}$ on each $t_{i}$ we are able to determine the complexity of the algorithm presented in Figure 1 when the coordinate functions are used.  We begin by finding an upper bound on the size of the matrices produced.  It is clear from the algorithm presented in \cite{nickel} that the size of the matrix representation produced depends on the size of the $\mathbb{Q}$-basis constructed using the algorithm presented in Figure 1.  To determine a bound, we begin by analyzing the main loop of the algorithm in Figure 1.  The number of iterations the main loop in Figure 1 undergoes throughout the algorithm is directly related to the size of the basis produced.

\begin{theorem}\label{dimthm} When the algorithm presented in \cite{nickel} is implemented using the coordinate functions $t_{i}$ for $1\leq i \leq n$ the worst case dimension of the matrix representation produced depends quadratically on the Hirsch length of the group; that is, the dimension of the matrix representation is $O(n^{2})$ where $n$ is the Hirsch length of the group.. \end{theorem}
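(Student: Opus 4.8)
The plan is to bound the size of the basis $B$ returned by the algorithm of Figure 1, since the dimension of the resulting matrix representation is exactly $|B|$. The structural fact I would lean on is that \emph{Insert} keeps $B$ in echelon form with respect to the reverse lexicographic ordering: every element of $B$ has a distinct leading monomial, and a polynomial is adjoined only when its reduction against the current basis is nonzero. Hence $|B|$ equals the number of distinct leading monomials that ever survive reduction, and the whole problem reduces to bounding the set of leading monomials the algorithm can produce.

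First I would record how the action changes leading monomials. Since each substitution polynomial satisfies $q_i^{(j)}=x_i$ for $i<j$, $q_j^{(j)}=x_j-k$, and $q_i^{(j)}=x_i+\overline{q_{i,k}}^{(j)}(x_1,\dots,x_{i-1})$ for $i>j$, substituting $q^{(j)}$ into any polynomial $f$ preserves the leading monomial of $f$. Consequently $f^{a_j}-f$ has strictly smaller leading monomial than $f$, so every genuinely new basis element obtained from $\mathrm{Insert}(B,f^{a_j})$ carries a leading monomial strictly below that of $f$. Specializing to the coordinate functions and invoking the three-case computation preceding the theorem, the new monomials produced directly from the $t_i$ are only: the constant monomial $1$, arising from case $i=j$ and shared across all $n$ phases; and the leading monomials of the corrections $\overline{q_{i,k}}^{(j)}(x_1,\dots,x_{i-1})$ from case $i>j$, which are indexed by the ordered pairs $(i,j)$ with $j<i$.

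This already points to the quadratic bound: there are $1+n+\binom{n}{2}=O(n^2)$ candidate leading monomials provided each ordered pair contributes only boundedly many. I would make this precise by a descent on a weighted degree, assigning $x_i$ the weight equal to the level of $a_i$ in the lower central series. Because $G$ is nilpotent, each $q_i$ is weight-homogeneous (with $x_i,y_i$ of weight $w_i$), so the operator $f\mapsto f^{a_j}-f$ cannot increase weighted degree while strictly lowering the leading monomial; this simultaneously guarantees termination of the \texttt{repeat} loop and shows that the leading monomial of any basis element has weighted degree at most $w_n\le n$. Tracking weighted degree through the phases, I would argue that the leading monomial of each surviving element is determined, up to finitely many choices, by the coordinate $t_i$ it descends from together with the generator $a_j$ whose action created it, capping the total at $O(n^2)$.

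The hard part will be controlling the elements that are \emph{not} coordinate functions: in phase $j$ the algorithm applies powers of $a_j$ to \emph{every} current basis element, including corrections adjoined in earlier phases, and iterates until stabilization. I must rule out a cascade in which repeatedly applying $a_j$ (and the higher powers $a_j^{k}$, which introduce the $(-k)^b$ terms of $\overline{q_{i,k}}^{(j)}$) manufactures a super-quadratic number of distinct leading monomials. The strict decrease of the leading monomial under $f\mapsto f^{a_j}-f$ bounds the length of any chain of such applications by the class $c\le n$, and the pair-indexing bounds their breadth; the technical core of the proof is to combine these two bounds and verify that the cross terms never escape the $\binom{n}{2}$ family, yielding $|B|=O(n^2)$.
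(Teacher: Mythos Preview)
Your approach via distinct leading monomials is genuinely different from the paper's. The paper tracks the basis explicitly through each iteration of the main loop and simply enumerates the elements $t_1,\dots,t_n$, the constant $r_1$, and the corrections $r_{i,k}^{(j)}$ for $j<i$ and $k=1,\dots,m_{i,j}$, arriving at the bound $\tfrac{m}{2}n(n+1)+1$ where $m=\max_{i,j}m_{i,j}$ is the largest number of terms in any $\overline{q_i}^{(j)}$. That constant $m$, which depends on the particular group's multiplication polynomials, is absorbed into the big-$O$. Your proposal, by contrast, seems to aim for a bound with a \emph{universal} constant independent of the group.

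That is where the gap lies. Your weighted-degree argument (even granting that the $q_i$ have weighted degree $w_i$; note they are not weight-\emph{homogeneous}) shows every leading monomial has weighted degree at most $c\le n$, but the number of monomials in $n$ variables of weighted degree at most $c$ can be of order $\binom{n+c}{c}$, far beyond $n^2$. You then fall back on the pair-indexing by $(i,j)$ with $j<i$, but this indexing is only justified for elements obtained \emph{directly} from some $t_i$ by a single $a_j$; once you iterate---apply $a_{j'}$ to a previously produced correction, or apply successive powers $a_j^k$---there is no argument that the resulting leading monomials stay inside the $\binom{n}{2}$ family. In fact they do not: already $t_i^{a_j^k}-t_i=\overline{q_{i,k}}^{(j)}$ can, as $k$ varies, contribute up to $m_{i,j}$ distinct leading monomials, not one. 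This is precisely why the paper's bound carries the factor $m$. So ``each ordered pair contributes only boundedly many'' is false in the universal sense you intend; it is true only with the group-dependent bound $m$. To close your argument you would either have to accept that factor (in which case the paper's direct enumeration is simpler than the leading-monomial machinery) or supply a genuinely new bound on the leading monomials arising under iteration, which the weighted-degree estimate does not provide.
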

\begin{proof}

We will prove the cardinality of the basis depends quadratically on the Hirsch length of the group by using induction on the number of times the main loop in Figure 1 is repeated.

Recall that $t_{i}^{a_{j}^{k}}:=t_{i}(q_{1,k}^{(j)}, \cdots q_{n,k}^{(j)})$ where $q_{i,k}^{(j)}=x_{i}$ for $1\leq i <j$, $q_{j,k}^{(j)}=x_{j}-k$, and $q_{i,k}^{(j)}=x_{i}+\overline{q_{i,k}}^{(j)}(x_{1}, \cdots, x_{i-1})$ for $j< i \leq n$. \\
First note that we may write  $\overline{q_{i,k}}^{(j)}=r_{i,k}^{(j)}+ \displaystyle \sum_{m=1}^{i-1}c_{m}t_{m}$ for appropriate $c_{m}\in \mathbb{Q}$, $r_{i,k}^{(j)}\in \mathbb{Q}[x_{1}, \cdots, x_{i-1}]$, where $r_{i,k}^{(j)}=0$ or cannot be written as a linear combination of the coordinate functions $t_{l}$  for $1\leq l \leq n$.\\

The algorithm presented in \cite{nickel} (see Figure 1) begins by building a basis from the coordinate functions $t_{1}, \cdots, t_{n}$ using \textit{Insert}.  By definition $t_{i}(x_{1}, \cdots, x_{n})=x_{i}$; hence, \textit{Insert} will form the basis $B=\left\{t_{1}, \cdots, t_{n}\right\}$. \\

Since we are considering the worst case scenario, we will assume that each polynomial $r_{i,k}^{(j)}$ is not a linear combination of $t_{1}, \cdots, t_{n}, r_{m,k}^{(j)} \text{ for } m\neq i$. \\

Claim: The basis produced in the worst case scenario is\\
\resizebox{\linewidth}{!}{$B=\left\{t_{1}, \cdots, t_{n}, r_{n,k_{n,n-1}}^{(n-1)}, \cdots,r_{n,k_{n,1}}^{(1)}, r_{n-1,k_{n-1,n-2}}^{(n-2)}, \cdots, r_{n-1,k_{n-1,1}}^{(1)}, \cdots, r_{3,k_{3,2}}^{(2)}, r_{3,k_{3,1}}^{(1)}, r_{2,k_{2,1}}^{(1)}, r_{1}\right\}$}\\ 
for $k_{i,j}=1, \cdots, m_{i,j}$ where $m_{i,j}$ denotes the number of terms in the polynomial $\overline{q_{i}}^{(j)}.$

We must first consider when the main loop in Figure 1 is iterated one time.   \\

 The loop begins by adding images of each $t_{i}$ under the action of $a_{n}$.  We may assume that the algorithm begins by adding images of $t_{n}$ under the action of $a_{n}$ to the basis.  We know that $t_{n}^{a_{n}}=t_{n}-1$, which clearly is not in the span of the basis.  \textit{Insert}($B$, $t_{n}^{a_{n}}$) will begin by subtracting $t_{n}$ from $t_{n}-1$, leaving $r=-1$ which will be added to the basis.  As it is necessary to close the $G$-module under the action of powers of $a_{n}$, we must repeat the process for $t_{n}^{a_{n}}$. Hence, the algorithm computes $(t_{n}^{a_{n}})^{a_{n}}=t_{n}^{a_{n}^{2}}=t_{n}-2$, which is the span of the basis; that is $r=0$ and the process terminates.   For $1\leq i \leq n-1$, we have that $t_{i}^{a_{n}}=t_{i}$. Hence, for $t_{i}$ with $1 \leq i \leq n-1$ no new polynomials will be added to the basis.  After this process is completed for $a_{n}$ the resulting basis will be $B=\left\{t_{1}, \cdots, t_{n}, r_{1}=-1 \right\}$.\\

\textit{Base case}: The main loop in Figure 1  is repeated twice.\\
The second repetition of the loop repeats the process for $a_{n-1}$. \\
 We begin by noting that for $1 \leq i \leq n-2$ we have $t_{i}^{a_{n-1}}=t_{i}$ and   $t_{n-1}^{a_{n-1}}=x_{n-1}-1=t_{n-1}+r_{1}$, which are in the span of the basis.
Hence, we need only to consider the case in which images of $t_{n}$ under the action of $a_{n-1}^{k}$ for $k\in \mathbb{N}$ are added to the basis. By the nature of the coordinate functions and since the polynomials $\overline{q_{n,k}}^{(n-1)}$ and $\overline{q_{n,j}}^{(n-1)}$ differ only by coefficients to close under the action of powers of $a_{n-1}$ the inner most loop will be repeated at most $m_{n,n-1}$ times, where $m_{n,n-1}$ denotes the number of terms in the polynomial $\overline{q_{n}}^{(n-1)}$; that is, we need to add images of $t_{n}$ under the action of $a_{n-1}^{k}$ for $k=1, \cdots, m_{n,n-1}$. 

From above we know that
\begin{align*}t_{n}^{a_{n-1}^{k}}=&t_{n}(x_{1}, \cdots, x_{n-1}-k, x_{n}+\overline{q_{n,k}}^{(n-1)}(x_{1}, \cdots, x_{n-1}))\\=&x_{n}+ \overline{q_{n,k}}^{(n-1)}(x_{1}, \cdots, x_{n-1})\\=& t_{n} + \overline{q_{n,k}}^{(n-1)}\\=& t_{n}+r_{n,k}^{(n-1)}+\displaystyle \sum_{m=1}^{n-1}c_{m}t_{m}. \end{align*}
 \textit{Insert}($B,t_{n}^{a_{n-1}}$) will subtract appropriate multiples of $t_{1}, \cdots, t_{n}$ from $t_{n}^{a_{n-1}^{k}}$ leaving $r=r_{n,k}^{(n-1)}$ to be added to the basis for $k=1, \cdots, m_{n,n-1}$. 
 Hence, after two iterations of the loop the resulting basis is \\$B=\left\{t_{1}, \cdots, t_{n},r_{n,k_{n,n-1}}^{(n-1)}, r_{1}\right\}$ for $k_{n,n-1}=1, \cdots, m_{n,n-1}$. \\

\textit{Inductive assumption:} Note that the $j^{th}$ iteration of the loop will add images of $t_{1}, \cdots, t_{n}$ under the action of powers of $a_{n-j+1}$.  Suppose for $2<j<n$ iterations of the loop the resulting basis is\\
\resizebox{\linewidth}{!}{$B=\left\{t_{1}, \cdots, t_{n}, r_{n,k_{n,n-1}}^{(n-1)}, \cdots, r_{n,k_{n,n-j+1}} ^{(n-j+1)}, r_{n-1,k_{n-1,n-2}}^{(n-2)}, \cdots, r_{n-1,k_{n-1,n-j+1}}^{(n-j+1)}, \cdots, r_{n-j+2,k_{n-j+2,n-j+1}}^{(n-j+1)},  r_{1},  \right\} $}
for $k_{h,i}=1, \cdots, m_{h,i}$ where each $m_{h,i}$ represents the number of terms in the polynomial $\overline{q_{h}}^{(i)}$. \\

\textit{$(j+1)^{st}$ iteration of the loop}: The main loop in Figure 1 will be performed for $a_{n-j}$.\\
We begin by noting that
\begin{tiny}
\begin{align*}
t_{i}^{a_{n-j}^{k}}:&=t_{i}(q_{1,k}^{(n-j)}, \cdots, q_{n,k}^{(n-j)})\\=&t_{i}(x_{1},\cdots, x_{n-j}-k, x_{n-j+1}+r_{n-j+1,k}^{(n-j)}+ \displaystyle \sum_{m=1}^{n-j}c_{m}t_{m}, \cdots, x_{n}+r_{n,k}^{(n-j)}+\displaystyle \sum_{m=1}^{n-1}d_{m}t_{m})
\end{align*}
\end{tiny}

We would like to see which polynomials will be added to the basis in this step.  We will look at three cases:
\begin{itemize}
\item  $1\leq i \leq n-j-1$:\\
$t_{i}^{a_{n-j}^{k}}=x_{i}=t_{i}.$ This is in the span of the basis, and so the algorithm will have $r=0$ and the process terminates.
\item $i=n-k$:\\
$t_{n-j}^{a_{n-j}^{k}}=x_{n-j}-k=t_{n-j}+kr_{1}$. This is in the span of the basis, and so the algorithm will have $r=0$ and the loop terminates.
\item $n-j+1\leq i \leq n$:\\
$t_{i}^{a_{n-j}^{k}}=x_{i}+r_{i,k}^{(n-j)}+\displaystyle \sum_{m=1}^{i-1}c_{m}t_{m}$.  By assumption $r_{i,k}^{(n-j)}$ is not in the span of the basis for each $n-j+1 \leq i \leq n$ and $k\in \mathbb{N}$. The algorithm will then add $r_{i,k_{i,n-j}}^{(n-j)}$ for $k_{i,n-j}=1, \cdots, m_{i,n-j}$ to the basis for each $i$, resulting in the basis \\
\resizebox{\linewidth}{!}{$B=\left\{t_{1}, \cdots, t_{n}, r_{n,k_{n,n-1}}^{(n-1)}, \cdots, r_{n,k_{n,n-j}} ^{(n-j)}, r_{n-1,k_{n-1,n-2}}^{(n-2)}, \cdots, r_{n-1,k_{n-1,n-j}}^{(n-j)}, \cdots, r_{n-j+2,k_{n-j+2,n-j}}^{(n-j)}, r_{n-j+1,k_{n-j+1,n-j}}^{(n-j)},  r_{1} \right\}$}\\ 
for $k_{h,i}=1,\cdots, m_{h,i}$.

\end{itemize}

Hence, for any $1\leq j<n$ we know after $j+1$ iterations the resulting basis will be \\
\resizebox{\linewidth}{!}{$B=\left\{t_{1}, \cdots, t_{n}, r_{n,k_{n,n-1}}^{(n-1)}, \cdots, r_{n,k_{n,n-j}} ^{(n-j)}, r_{n-1,k_{n-1,n-2}}^{(n-2)}, \cdots, r_{n-1,k_{n-1,n-j}}^{(n-j)}, \cdots, r_{n-j+2,k_{n-j+2,n-j}}^{(n-j)}, r_{n-j+1,k_{n-j+1,n-j}}^{(n-j)},  r_{1} \right\}$}\\ 
for $k_{h,i}=1,\cdots, m_{h,i}$ where each $m_{h,i}$ represents the number of terms in the polynomial $\overline{q_{h}}^{(i)}$.
 The main loop in Figure 1 will be repeated $n$ times (one for each element in the nilpotent generating sequence), and the resulting basis will be\\
\resizebox{\linewidth}{!}{ $B=\left\{t_{1}, \cdots, t_{n}, r_{n,k_{n,n-1}}^{(n-1)}, \cdots,r_{n,k_{n,1}}^{(1)}, r_{n-1,k_{n-1,n-2}}^{(n-2)}, \cdots, r_{n-1,k_{n-1,1}}^{(1)}, \cdots, r_{3,k_{3,2}}^{(2)}, r_{3,k_{3,1}}^{(1)}, r_{2,k_{2,1}}^{(1)}, r_{1}\right\}$}\\ 
for $k_{i,j}=1, \cdots, m_{i,j}$ as desired.

 The cardinality of this basis is at most\\ $n+\displaystyle\sum_{i=1}^{n-1} m_{n,i}+\displaystyle\sum_{i=1}^{n-2} m_{n-1,i}+\cdots +\displaystyle\sum_{i=1}^{2} m_{3,i}+m_{2,1}+1$. 

Let $m$ denote the largest of the $m_{i,j}$.  Then the cardinality of the basis is bounded above by
\begin{align*}n+(n-1)m+(n-2)m+\cdots +2m+m+1\leq& m\displaystyle \sum_{i=1}^{n} i +1\\ \leq&\dfrac{m}{2}n(n+1)+1.\end{align*}
Thus, the dimension of the matrix representation is $O(n^{2})$.

\end{proof}

\begin{theorem}\label{comthm} The running time of the algorithm presented in Figure 1 using the coordinate functions $t_{i}$ for $1 \leq i \leq n$ is $O(n^{l+2})$, where $n$ is the Hirsch length of the group and $l$ is the highest degree of the polynomials $t_{i}^{a_{j}}$.
\end{theorem}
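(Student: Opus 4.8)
The plan is to bound the total running time as a product of two factors: the number of image computations $f\mapsto f^{a_j}$ together with the accompanying \textit{Insert} calls, and the cost of a single such operation. I would argue that these factors are $O(n^2)$ and $O(n^l)$ respectively, so that the product is $O(n^{l+2})$.

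First I would bound the cost of a single step. Every polynomial that occurs in the algorithm---each coordinate function $t_i$, each image $t_i^{a_j}$, and each adjoined basis element $r_{i,k}^{(j)}$---has degree at most $l$ in the $n$ variables $x_1,\dots,x_n$, so it carries at most $\binom{n+l}{l}=O(n^l)$ monomials. Computing an image $f^{a_j}=f(q_1^{(j)},\dots,q_n^{(j)})$ is a substitution whose cost is proportional to the number of monomials produced, and a single call to \textit{Insert} reduces a polynomial against the (echelon) basis at a cost again proportional to the number of monomials involved. Under the standard model in which polynomial arithmetic is linear in the number of monomials, each image computation and each \textit{Insert} therefore costs $O(n^l)$.

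Next I would count the operations using Theorem~\ref{dimthm}. The main loop runs exactly $n$ times and the final basis has $O(n^2)$ elements, giving $O(n^2)$ successful \textit{Insert} calls, one per adjoined element. The structural relations recorded before Theorem~\ref{dimthm}---namely $t_i^{a_j}=t_i$ for $i<j$, $t_j^{a_j}=t_j-1$, and $t_i^{a_j}=t_i+\overline{q_{i}}^{(j)}$ for $i>j$---show that acting by a generator sends each coordinate function, and each previously adjoined element, into the span of the current basis after boundedly many applications (at most $m=\max_{i,j} m_{i,j}$, treated as a constant). Hence every inner \textbf{repeat} loop terminates after $O(1)$ iterations, and the number of image computations plus \textit{Insert} calls that perform nontrivial work is $O(n^2)$. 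Combining with the per-step bound gives $O(n^2)\cdot O(n^l)=O(n^{l+2})$.

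The main obstacle is precisely this counting step. A naive estimate lets the main loop sweep, in each of its $n$ passes, over a basis that has already grown to size $O(n^2)$, which would contribute a spurious extra factor of $n$ and only yield $O(n^{l+3})$. Overcoming this requires the structural input from Theorem~\ref{dimthm} that the action of $a_j$ never carries an already-constructed basis element outside the current span, so that these repeated sweeps produce only $r=0$ and can be detected cheaply---for instance via the index condition $i<j$, under which $t_i^{a_j}=t_i$---keeping the number of cost-$O(n^l)$ operations at $O(n^2)$ rather than $O(n^3)$.
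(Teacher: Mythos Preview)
Your overall strategy coincides with the paper's: factor the running time as (number of \textit{Insert} calls) times (cost per call), bound the latter by $O(n^{l})$ via the monomial count $\binom{n+l}{l}$, and argue the former is $O(n^{2})$. The difference lies in how that $O(n^{2})$ is obtained. The paper does not separate ``successful'' from ``unsuccessful'' \textit{Insert} calls at all; it gives an explicit iteration-by-iteration tally. In the $(k+1)$-st pass (closing under $a_{n-k}$) it counts exactly $(n-k)+\sum_{i=n-k+1}^{n} m_{i,n-k}$ calls---the first term for the $t_i$ with $i\le n-k$ (each yielding $r=0$ after one call), the second for the $t_i$ with $i>n-k$---and then sums over $k$ to get at most $1+\tfrac{m+1}{2}n^{2}+\tfrac{1-m}{2}n$ calls in total. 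Multiplying by the per-call bound $\tfrac{c}{l!}(n+l-1)^{l}$ gives $O(n^{l+2})$ directly, with no appeal to cheap detection.

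Your route instead bounds the \emph{successful} calls by the basis size from Theorem~\ref{dimthm} and then asserts that the remaining sweeps over an $O(n^{2})$-element basis ``can be detected cheaply---for instance via the index condition $i<j$.'' That is the gap: the algorithm of Figure~1 carries no such index test, and an \textit{Insert} call that returns $r=0$ still performs subtractions proportional to the number of monomials of $f^{a_j}$, which may be $O(n^{l})$. So as written your accounting only delivers $O(n^{3})$ calls of cost $O(n^{l})$, i.e.\ $O(n^{l+3})$. You have correctly identified the obstacle, but the resolution is not to declare those calls cheap; it is---as the paper does---to show that the \emph{total} number of \textit{Insert} invocations, trivial ones included, is already $O(n^{2})$ by summing the per-pass contributions $(n-k)+\sum_i m_{i,n-k}$ explicitly.
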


\begin{proof}
Before determining the worst case complexity of the algorithm presented in Figure 1, we must determine the number of steps needed by the method \textit{Insert}.  \textit{Insert} takes a basis of polynomials in ascending order and a polynomial $f$, and determines if $f$ is in the span of the basis by subtracting the appropriate multiple of each polynomial in the basis, starting with the polynomial that is largest with respect to the ordering and continuing in descending order. If $f\neq 0$ after this process is completed, then $f$ is not in the span of the basis and is added to the original basis.  It is clear that the number of subtractions used on a given polynomial $f$ by \textit{Insert} is at most the number of terms in $f$.  Each polynomial $t_{i}^{a_{j}}$ is a polynomial in $n$ variables since $t_{i}^{a_{j}}=t_{i}(q_{1}^{(j)}, \cdots, q_{n}^{(j)})$.  An $l^{th}$ degree polynomial with $n$ variables has at most $\displaystyle \sum_{k=1}^{l}$ $k+n-1 \choose k$ terms, which is bounded above by $\dfrac{c}{l!}(n+l-1)^{l}$ for an appropriate $c\in \mathbb{N}$. We will denote the number of terms in a given polynomial $f$ by $m_{f}$.\\

Recall the algorithm to form a $\mathbb{Q}$-basis of the $G$-module generated by the coordinate functions $t_{i}$ for $1 \leq i \leq n$ begins by building a basis from the coordinate functions $t_{1}, \cdots, t_{n}$ by implementing \textit{Insert}($B$, $t_{i}$) for $1 \leq i \leq n$. It is clear from the above remarks that this process will take a total of $\displaystyle \sum_{i=1}^{n} m_{t_{i}}$ steps. 

The next step to determine the complexity of the algorithm for building a $\mathbb{Q}$-basis is to determine how many iterations of the innermost loop in Figure 1 occur for each $a_{j}$.  In the proof of Theorem \ref{dimthm} one can see how many iterations of Insert need to be performed for each $a_{j}$, where $1\leq j \leq n$. It is clear from the proof of Theorem \ref{dimthm} that when the first iteration of the loop is performed (closing under the action of $a_{n}$) \textit{Insert} is repeated $n+1$ times.  For the $k+1^{st}$ iteration the algorithm will close the module under the action of $a_{n-k}$.  For $t_{i}$, with $1\leq i \leq n-k$, $t_{i}^{a_{n-k}}$ is already in the basis and hence $n-k$ iterations of \textit{Insert} are performed. For $t_{i}$ with $n-k+1 \leq i \leq n$, $t_{i}^{a_{n-k}}$ is not in the span of the basis and \textit{Insert} is performed $m_{i,n-k}$ times, where $m_{i,n-k}$ denotes the number of terms in the polynomial $\overline{q_{i}}^{(n-k)}$, for each $i$ (see proof of Theorem \ref{dimthm}).  Hence, here \textit{Insert} will be performed $\displaystyle \sum_{i=n-k+1}^{n}m_{i,n-k}$ times. Thus, the total number of times Insert will be performed in the $k+1^{st}$ iteration of the loop is $(n-k)+\displaystyle \sum_{i=n-k+1}^{n}m_{i,n-k}$. As above let $m$ denote the maximum of the $m_{i,j}$.

The total number of iterations of Insert performed in the main loop of Figure 1 is then
 \begin{align*}(n+1)+\displaystyle \sum_{k=1}^{n-1}(n-k)+\displaystyle \sum_{k=1}^{n-1}\sum_{i=n-k+1}^{n}m_{i,n-k}&=1+\displaystyle\sum_{k=1}^{n}k+\displaystyle \sum_{k=1}^{n-1}\sum_{i=n-k+1}^{n}m_{i,n-k}\\=& 1+\dfrac{1}{2}n(n+1)+\displaystyle \sum_{k=1}^{n-1}\sum_{i=n-k+1}^{n}m_{i,n-k}\\ \leq&1+\dfrac{1}{2}n(n+1)+\displaystyle \sum_{k=1}^{n-1}km\\=&1+\dfrac{1}{2}n(n+1)+\dfrac{m}{2}n(n-1)\\=&1+\dfrac{m+1}{2}n^{2}+\dfrac{1-m}{2}n\end{align*}

Let $l$ be the highest degree of the polynomials utilized in the algorithm.  Then we know the maximum number of steps that \textit{Insert} requires at any step is bounded above by $\dfrac{c}{l!}(n+l-1)^{l}$. Then the number of steps required by the main loop in Figure 1  is bounded above by 

\begin{eqnarray*}\dfrac{c}{l!}(n+l-1)^{l}(1+\dfrac{m+1}{2}n^{2}+\dfrac{1-m}{2}n)\end{eqnarray*} and the total number of steps required by the algorithm is bounded above by 
\resizebox{\linewidth}{!}{$\displaystyle \sum_{i=1}^{n} m_{t_{i}}+\dfrac{c}{l!}(n+l-1)^{l}(1+\dfrac{m+1}{2}n^{2}+\dfrac{1-m}{2}n) \leq \dfrac{c}{l!}(n+l-1)^{l}(1+\dfrac{m+1}{2}n^{2}+\dfrac{3-m}{2}n).$} Thus, the total running time of the algorithm is $O(n^{l+2})$.

\end{proof}

\section{Modified Algorithm}
From Theorem \ref{dimthm}, we see that the $\mathbb{Q}$-basis of the $G$-module generated by the coordinate functions $t_{1}, \cdots, t_{n}$ is a subset of \\
\resizebox{\linewidth}{!}{$B=\left\{t_{1}, \cdots, t_{n}, r_{n,k_{n,n-1}}^{(n-1)}, \cdots,r_{n,k_{n,1}}^{(1)}, r_{n-1,k_{n-1,n-2}}^{(n-2)}, \cdots, r_{n-1,k_{n-1,1}}^{(1)}, \cdots,\\ r_{3,k_{3,2}}^{(2)}, r_{3,k_{3,1}}^{(1)}, r_{2,k_{2,1}}^{(1)}, r_{1}\right\}$}
\\ 
for $k_{i,j}=1, \cdots, m_{i,j}$ where $m_{i,j}$ denotes the number of terms in the polynomial $\overline{q_{i}}^{(j)}.$
 Using this fact we may modify the algorithm presented by Nickel in \cite{nickel}.   \\

 In order to find a $\mathbb{Q}$-basis of the $G$-module generated by the coordinate functions $t_{1}, \cdots, t_{n}$ we must begin by building a basis up from the coordinate functions using \textit{Insert} as is done in \cite{nickel}.
 By the nature of the coordinate functions, it is clear that the action of each $a_{j}$ for $1\leq j \leq  n$ will only add new polynomials to the basis at certain steps in the algorithm.  Recall that in the first time the main loop in Figure 1 is repeated the only polynomial that could be added to the basis is when $Insert(B, t_{n}^{a_{n}})$ is applied.  Then in the $k+1^{st}$, where $1\leq k \leq n-1$,  iteration of the loop the only polynomials that could be added to the basis arise from closing under powers of $a_{n-k}$ for $n-k+1 \leq i \leq n$; that is, we need only perform $Insert(B, t_{n}^{a_{n}})$ and $Insert(B,t_{i}^{a_{j}^{p}})$ for $i>j$, $1\leq j \leq n-1$ and for $p\in \mathbb{N}$. This is clear in the proof of Theorem \ref{dimthm}.   Hence, in order to determine the $\mathbb{Q}$-basis of the $G$-module generated by $t_{1},\cdots,t_{n}$ it is enough to implement $Insert$ for only these polynomials. The pseudo-code for the algorithm is displayed in Figure 2.\\
\begin{figure}[t]
\label{Building a Basis using Coordinate Functions}
\caption{Building a Basis for a $G$-module using Coordinate Functions}
\flushleft
\textbf{Input:} A finitely generated torsion-free nilpotent group $G$ with nilpotent generating sequence $a_{1}, \cdots,a_{n}$ and multiplication polynomials $q_{1}, \cdots, q_{n}$; coordinate functions $t_{1}, \cdots, t_{n}$.\\
\textbf{Output:} A $\mathbb{Q}$-basis $B$ for the $G$-module generated by $t_{1}, \cdots, t_{n}$.
\textbf{Algorithm:}\\
$B:=[ ]$;\\
 \flushleft \quad \textbf{for} $j$ \textbf{in} $[1, \cdots n]$ \textbf{do } $Insert(B,t_{j})$; \textbf{od};\\
\flushleft \quad \textbf{do} $Insert(B,t_{n}^{a_{n}})$; \textbf{od};\\
 \quad\textbf{for} $j$ \textbf{in} $[n-1\cdots 1]$ \textbf{do}\\
 \qquad \textbf{for} $i$ \textbf{in} $[1, \cdots n]$ \textbf{do}\\
\qquad\quad \textbf{while} $ i>j$ \textbf{do}\\
 \quad \quad \qquad \textbf{repeat} \\
 \quad \qquad \qquad  \# Compute $t_{i}^{a_{j}}$\\
 \quad \qquad \qquad $t_{i}^{a_{j}}:=t_{i}(q_{1}^{(j)}, \cdots, q_{n}^{(j)})$;\\
 \quad \qquad \qquad r:=Insert($B,t_{i}^{a_{j}}$);\\
 \quad \qquad \qquad $t_{i}:=t^{a_{j}}$;\\
 \quad \qquad \textbf{until} r=0;\\
\qquad \quad \textbf{od};\\
 \qquad \textbf{od};\\
 \quad \textbf{od};\\
 \flushleft \textbf{return} $B$;

 \end{figure}
\begin{theorem}
The algorithm presented in Figure 2 has running time $O(n^{l+2})$, where $n$ is the Hirsch length of the group and $l$ is the highest degree of the polynomials $t_{i}^{a_{j}}$.
\end{theorem}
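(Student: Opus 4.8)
The plan is to reduce this to the count already carried out in the proof of Theorem~\ref{comthm}, since Figure~2 differs from Figure~1 only by suppressing those calls to \textit{Insert} that are guaranteed to be redundant. First I would reuse the per-call bound established in Theorem~\ref{comthm}: each polynomial $t_i^{a_j}=t_i(q_1^{(j)},\dots,q_n^{(j)})$ lies in $\mathbb{Q}[x_1,\dots,x_n]$, an $l$-degree polynomial in $n$ variables has at most $\sum_{k=1}^{l}\binom{k+n-1}{k}\le \frac{c}{l!}(n+l-1)^l$ terms, and \textit{Insert} performs at most one subtraction per term; hence a single invocation of \textit{Insert} costs $O(n^l)$ steps.

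Next I would count the total number of \textit{Insert} invocations made by Figure~2. The initial loop builds the basis from $t_1,\dots,t_n$ in exactly $n$ calls. By the case analysis in the proof of Theorem~\ref{dimthm}, the action of $a_j$ fixes $t_i$ whenever $i\le j$, so the only calls that can enlarge the basis — and, by construction, the only ones Figure~2 issues inside its nested loops — are those closing $t_i$ under powers of $a_j$ for $i>j$; each such closure terminates after at most $m_{i,j}$ productive steps together with the terminating step returning $r=0$. Writing $m=\max_{i,j} m_{i,j}$, the number of calls in the main loop is therefore bounded by
\begin{align*}
1+\sum_{j=1}^{n-1}\sum_{i=j+1}^{n}(m_{i,j}+1)&\le 1+(m+1)\sum_{j=1}^{n-1}(n-j)\\
&=1+\tfrac{m+1}{2}\,n(n-1),
\end{align*}
which is $O(n^2)$.

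The key point I would emphasize is that this count is no larger than the corresponding count in Theorem~\ref{comthm}: Figure~2 is obtained from Figure~1 precisely by deleting the \textit{Insert} calls for indices $i\le j$, each of which returns $r=0$ without altering $B$. Thus Figure~2 performs the same basis-building computation with fewer operations, and the asymptotics of Theorem~\ref{comthm} apply \emph{a fortiori}. Multiplying the $O(n^2)$ invocations by the $O(n^l)$ cost of each, and adding the negligible cost $\sum_{i=1}^{n} m_{t_i}=n$ of the initial building phase (each coordinate function $t_i=x_i$ has a single term), gives a total running time of $O(n^{l+2})$.

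The argument presents essentially no deep obstacle; the only point requiring care is bookkeeping against the loop structure of Figure~2. Concretely, I would verify that the nested \textbf{for}/\textbf{while}/\textbf{repeat} construction really issues exactly the nonredundant calls counted above, that the single explicit $a_n$-closure step contributes only $O(1)$ calls and so does not affect the leading term, and that the exponent $l$ is genuinely an upper bound for the degree of \emph{every} polynomial $t_i^{a_j}$ produced during the run (these arise from the successive reassignments $t_i:=t_i^{a_j}$), not merely for the degree-one inputs $t_i$.
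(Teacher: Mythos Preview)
Your proposal is correct and follows essentially the same approach as the paper: bound the cost of a single \textit{Insert} call by $O(n^l)$ via the term-count estimate, then count the total number of \textit{Insert} invocations in Figure~2 as $O(n^2)$ by summing the $m_{i,j}$ over pairs $i>j$, and multiply. Your inclusion of the terminating ``$+1$'' call per pair and the \emph{a fortiori} comparison to Figure~1 are minor refinements that do not change the argument or the asymptotics.
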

\begin{proof}
We denote the number of terms in a polynomial $f$ by $m_{f}$.  Then the number of steps required for $Insert(B,t_{i})$ for $1\leq i \leq n$ is at most $m_{t_{i}}$ and the number of steps of $Insert(B,t_{n}^{a_{n}})$  is at most $m_{t_{n}}$.   

Next, we must determine the number of times $Insert$ is performed for the $k+1^{st}$ iteration of the innermost \textbf{for} loop in Figure 3; that is, we must determine the number of times $Insert$ is performed to close the module under the action of $a_{n-k}$. As the only difference between the modified algorithm and the algorithm presented in \cite{nickel} when implemented with the coordinate functions is the \textbf{while} loop, the number of times $Insert$ is performed for each $a_{n-k}$ only differs by $n-k$ (see proof of \ref{comthm}). Thus, the total number of times Insert will be performed in the $k+1^{st}$ iteration of the loop is $\displaystyle \sum_{i=n-k+1}^{n}m_{i,n-k}$. Let $m$ denote the maximum of the $m_{i,j}$.

The total number of iterations of Insert performed is then
 \begin{align*}\displaystyle \sum_{k=1}^{n-1}\sum_{i=n-k+1}^{n}m_{i,n-k}&\leq \displaystyle \sum_{k=1}^{n-1}km\\=&\dfrac{m}{2}n(n-1)\\=&\dfrac{m}{2}n^{2}-\dfrac{m}{2}n\end{align*}

Let $l$ be the highest degree of the polynomials utilized in the algorithm.  Then we know the maximum number of steps that \textit{Insert} requires at any step is bounded above by $\dfrac{c}{l!}(n+l-1)^{l}$ for some $c\in \mathbb{N}$. Then the number of steps required by the main loop in Figure 2  is bounded above by 

\begin{eqnarray*}\dfrac{c}{l!}(n+l-1)^{l}(\dfrac{m}{2}n^{2}-\dfrac{m}{2}n)\end{eqnarray*} and the total number of steps required by the algorithm is bounded above by \begin{small}
\begin{eqnarray*}\displaystyle \sum_{i=1}^{n} m_{t_{i}}+m_{t_{n}}+\dfrac{c}{l!}(n+l-1)^{l}(\dfrac{m}{2}n^{2}-\dfrac{m}{2}n) \leq \\ \dfrac{c}{l!}(n+l-1)^{l}(1+\dfrac{m}{2}n^{2}+\dfrac{2-m}{2}n).\end{eqnarray*} \end{small} Thus, the total running time of the algorithm is $O(n^{l+2})$.
\end{proof}

\section{Closing Remarks}
We have shown that the algorithm in \cite{nickel} has polynomial time complexity with respect to the Hirsch length if the coordinate functions are utilized, and that the dimension of the matrix representation produced depends quadratically on the Hirsch length of the group.  It is possible to improve the bound on the dimension and time complexity by using a different set of functions rather than the coordinate functions. Unfortunately, there is no known way to choose these functions.  \\

The modified algorithm is only for generating a $\mathbb{Q}$-basis
for the $G$-module generated by the coordinate functions unlike the
algorithm presented in \cite{nickel}, which works for a general set
of functions $f_{1}, \cdots,f_{k}$.  We would like to note that the
running time of the modified algorithm can have better running times than the algorithm presented in \cite{nickel} since the running time is dependent on the polynomials $t_{i}^{a_{j}}$. While this algorithm potentially decreases running time, it does not affect the dimension of the matrices produced.

\section*{Acknowledgement} The authors would like to thank the anonymous referee for his helpful comments. Delaram Kahrobaei is grateful to Professor
Derek Holt for providing the opportunity for her to visit Warwick
University in summer 2010 and very stimulating conversations
regarding this problem.
\bibliographystyle{plain}
\bibliography{XBib}

\end{document}